\documentclass[11pt,oneside,english,final]{amsart}

\usepackage[notcite,notref,color]{showkeys}  

\usepackage{amstext,amsthm,amssymb,mathabx,mathtools}
\usepackage[T1]{fontenc}
\usepackage[latin1]{inputenc}
\usepackage{geometry}
\geometry{verbose,tmargin=3.2cm,bmargin=3.2cm,lmargin=3cm,rmargin=3cm}
\usepackage{graphicx}

\usepackage[colorlinks,bookmarks,linkcolor=black,citecolor=black]{hyperref} 
\usepackage{enumitem}
\usepackage[english,algoruled,lined,noresetcount,norelsize]{algorithm2e}


\newtheorem{theorem}{Theorem}
\newtheorem{lemma}[theorem]{Lemma}

\theoremstyle{remark}

\newcommand{\oldqed}{}
\newcommand{\qedClaim}{\hfill\scalebox{.6}{$\Box$}}

 
\setlist{itemsep=2pt,parsep=1pt,topsep=3pt,partopsep=0pt}  
\setenumerate{leftmargin=*,labelindent=\parindent} 



\DeclarePairedDelimiter{\size}{|}{|}

\renewcommand{\rho}{\varrho}
\renewcommand{\subset}{\subseteq}

\usepackage{tikz}
\usetikzlibrary{math}
\usetikzlibrary{calc,decorations.pathmorphing,through}
\tikzset{snake it/.style={decorate, decoration=snake}}

\definecolor{DarkDesaturatedBlue}{HTML}{3A3556}
\definecolor{VividOrange}{HTML}{F15918}
\definecolor{PureOrange}{HTML}{FFBA00}
\definecolor{LightGrayishPink}{HTML}{EEC5D5}
\definecolor{VerySoftBlue}{HTML}{B5AFDB}

\newcommand{\triple}[7]{
 
 \ifx\relax#4\relax
 \def\qoffs{0pt}
 \else
 \def\qoffs{#4}
 \fi
 
 \def\qhedge{
  ($#1+#3!\qoffs!-90:#2-#3$) --
  ($#2+#1!\qoffs!-90:#3-#1$) --
  ($#3+#2!\qoffs!-90:#1-#2$) -- cycle}
 
 \coordinate (12) at ($#1!\qoffs!90:#2$);
 \coordinate (13) at ($#1!\qoffs!-90:#3$);
 \coordinate (23) at ($#2!\qoffs!90:#3$);
 \coordinate (21) at ($#2!\qoffs!-90:#1$);
 \coordinate (31) at ($#3!\qoffs!90:#1$);
 \coordinate (32) at ($#3!\qoffs!-90:#2$);
 
 \def\nqhedge{
  (13) let \p1=($(13)-#1$), \p2=($(12)-#1$) in
  arc[start angle={atan2(\y1,\x1)}, delta angle={atan2(\y2,\x2)-atan2(\y1,\x1)-360*(atan2(\y2,\x2)-atan2(\y1,\x1)>0)}, x radius=\qoffs, y radius=\qoffs] --
  (21) let \p1=($(21)-#2$), \p2=($(23)-#2$) in
  arc[start angle={atan2(\y1,\x1)}, delta angle={atan2(\y2,\x2)-atan2(\y1,\x1)-360*(atan2(\y2,\x2)-atan2(\y1,\x1)>0)}, x radius=\qoffs, y radius=\qoffs] --
  (32) let \p1=($(32)-#3$), \p2=($(31)-#3$) in
  arc[start angle={atan2(\y1,\x1)}, delta angle={atan2(\y2,\x2)-atan2(\y1,\x1)-360*(atan2(\y2,\x2)-atan2(\y1,\x1)>0)}, x radius=\qoffs, y radius=\qoffs] --
  cycle}
 
 \ifx\relax#5\relax
 \def\qlwidth{1pt}
 \else
 \def\qlwidth{#5}
 \fi
 
 \ifx\relax#7\relax
 \fill \nqhedge;
 \else
 \fill[#7]\nqhedge;
 \fi
 
 \ifx\relax#6\relax
 \draw[line width=\qlwidth,rounded corners=\qoffs]\nqhedge;
 \else
 \draw[line width=\qlwidth,#6]\nqhedge;
 \fi
}

\title[Ramsey numbers for $1$-degenerate $3$-graphs]{Ramsey numbers for $1$-degenerate $3$-graphs}

\author[P. Allen]{Peter Allen}
\address{(PA) London School of Economics, Department of Mathematics, Houghton Street, London WC2A 2AE, UK}
\email{p.d.allen@lse.ac.uk}
\author[S. Boyadzhiyska]{Simona Boyadzhiyska}
\address{(SB) HUN-REN Alfr\'ed R\'enyi Institute of Mathematics, Budapest, Hungary}
\email{simona@renyi.hu}

\author[M. Pavez-Sign\'e]{Mat\'ias Pavez-Sign\'e}
\address{(MPS) Mathematical Engineering Department, University of Chile, and Center for Mathematical Modeling (CNRS IRL2807), Chile.}
\email{mpavez@dim.uchile.cl}

\date{}

\begin{document}

\begin{abstract}
 We construct a $3$-uniform $1$-degenerate hypergraph on $n$ vertices whose $2$-colour Ramsey number is $\Omega\big(n^{3/2}/\log n\big)$. This shows that all remaining open cases of the hypergraph Burr-Erd\H{o}s conjecture are false. Our graph is a variant of the celebrated hedgehog graph. We additionally show near-sharp upper bounds, proving that all $3$-uniform generalised hedgehogs have $2$-colour Ramsey number $O\big(n^{3/2}\big)$.
\end{abstract}

\maketitle

\section{Introduction}

Given two $r$-uniform graphs ($r$-graphs) $G$ and $H$, the \emph{(2-colour) Ramsey number} of $G$ and~$H$, denoted $R(G,H)$, is the smallest number $n\in\mathbb N$ so that any blue/red-colouring of the edges the complete $r$-graph on $n$ vertices contains a blue copy of $G$ or a red copy of $H$. If~$G=H$, then we simply write $R(G)=R(G,H)$ and say that $R(G)$ is the (2-colour) Ramsey number of $G$. Ramsey numbers for more colours are defined analogously: we seek a monochromatic copy of an {$r$-graph} $G$ in a $k$-edge-coloured complete $r$-graph and refer to the corresponding parameter as the \emph{$k$-colour Ramsey number}.

As usual, the \emph{degree} of a vertex in a hypergraph is the number of edges it is contained in.
An $r$-graph $G$ is said to be \emph{$D$-degenerate} if any subhypergraph $H$ of $G$ has a vertex of degree at most $D$ in $H$. An influential conjecture of Burr and Erd\H{o}s~\cite{BurErd} from the 70s, proved by Lee~\cite{Lee} in 2017, is that for any constants $D$ and $k$ there is a constant $C$ such that any $n$-vertex $D$-degenerate $2$-graph has $k$-colour Ramsey number at most $Cn$. 

A natural question, the \emph{hypergraph Burr-Erd\H{o}s conjecture}, is then whether a similar statement holds for uniformity $r\ge 3$. This is false: the first disproof was due to Kostochka and R\"odl~\cite{KosRod} who found a $4$-uniform $1$-degenerate counterexample in the 2-colour setting; it follows that the statement also fails for higher uniformities, degeneracies and number of colours. Uniformity three is less understood: Kostochka and R\"odl found counterexamples in the 2-colour setting for sufficiently large degeneracy, and Conlon, Fox and R\"odl~\cite{CFR} found a $1$-degenerate counterexample for three colours. Recently, Dubroff, Gir\~ao, Hurley and Yap~\cite{DGHY} found a counterexample of degeneracy only eight in the 2-colour case. This leaves open (only) the situation for $3$-uniform hypergraphs with degeneracy at most seven in two colours.

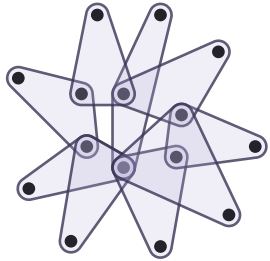
\begin{figure}[h!]
    \centering\begin{tikzpicture}[thick, scale=.7]
    \tikzstyle{every node}=[circle, draw, fill, inner sep=0pt, minimum width=4pt]
    
    \draw node (b1) at (0.3,0.2) {};
    \draw node (b2) at (1,-0.2) {};
    \draw node (b3) at (2,0) {};
    \draw node (b4) at (2.1,0.8) {};
    \draw node (b5) at (1,1.2) {};
    \draw node (b6) at (0.2,1.2) {};

    
    \draw node (s8) at (1.7,2.7) {};
    \triple{(b5)}{(s8)}{(b2)}{6pt}{1pt}{DarkDesaturatedBlue,opacity=0.8}{VerySoftBlue,opacity=0.2};

    \draw node (s1) at (-1,1.5) {};
    \triple{(s1)}{(b6)}{(b1)}{6pt}{1pt}{DarkDesaturatedBlue,opacity=0.8}{VerySoftBlue,opacity=0.2};
    
    \draw node (s2) at (-.8,-.6) {};
    \triple{(b1)}{(b2)}{(s2)}{6pt}{1pt}{DarkDesaturatedBlue,opacity=0.8}{VerySoftBlue,opacity=0.2}; 
    
    \draw node (s3) at (0,-1.6) {};
    \triple{(b1)}{(b2)}{(s3)}{6pt}{1pt}{DarkDesaturatedBlue,opacity=0.8}{VerySoftBlue,opacity=0.2};

    \draw node (s4) at (1.7,-1.7) {};  
    \triple{(s4)}{(b2)}{(b3)}{6pt}{1pt}{DarkDesaturatedBlue,opacity=0.8}{VerySoftBlue,opacity=0.2}; 

    \draw node (s5) at (3.5,0.2) {};
    \triple{(b4)}{(s5)}{(b3)}{6pt}{1pt}{DarkDesaturatedBlue,opacity=0.8}{VerySoftBlue,opacity=0.2};

    \draw node (s6) at (2.8,2) {};
    \triple{(s6)}{(b4)}{(b5)}{6pt}{1pt}{DarkDesaturatedBlue,opacity=0.8}{VerySoftBlue,opacity=0.2};

    \draw node (s7) at (.5,2.7) {};
    \triple{(b5)}{(b6)}{(s7)}{6pt}{1pt}{DarkDesaturatedBlue,opacity=0.8}{VerySoftBlue,opacity=0.2};
    
    \draw node (s8) at (3,-1.1) {};
    \triple{(b2)}{(b4)}{(s8)}{6pt}{1pt}{DarkDesaturatedBlue,opacity=0.8}{VerySoftBlue,opacity=0.2};

    \end{tikzpicture}    \caption{A generalised hedgehog with 6 vertices in its body and 9 spikes.}
    \label{fig:hedgehog}
\end{figure}
In this note we settle the question completely by finding a  $1$-degenerate 3-uniform hypergraph with non-linear 2-colour Ramsey number. This counterexample is a variant on the \emph{hedgehog}, defined by Kostochka and R\"odl~\cite{KosRod}. A {hedgehog} is a 3-uniform hypergraph $H$ defined on vertex set $B\sqcup S$, where $B$ is referred to as the \emph{body} of $H$ while $S$ is the set of \emph{spikes}.
The \emph{standard hedgehog} has $\size{S} = \binom{\size{B}}{2}$ and each pair of vertices in the body forms an edge with exactly one spike.  
In a \emph{(generalised) hedgehog}, we only require that each spike vertex form an edge with exactly one pair of vertices in the body (see Figure~\ref{fig:hedgehog}). Notice that any hedgehog is 1-degenerate.

The standard hedgehog (and its higher uniformity generalisations) have been well studied and provide most of the constructions mentioned above. In addition, famously, the standard hedgehog is `not colour-blind': Conlon, Fox and R\"odl~\cite{CFR} showed that its $k$-colour Ramsey number varies from polynomial in the number of vertices when $k=2$ to exponential \mbox{when~$k=4$,} providing the first example where a small increase in the number of colours causes such a jump. Fox and Li~\cite{FoxLi} subsequently showed that the $2$-colour Ramsey number of the standard hedgehog on $n$ vertices is actually almost linear: it is $O(n\log n)$.

\medskip

We use generalised hedgehogs to prove that there exist counterexamples with degeneracy one in the 2-colour setting. This is our main contribution.
\begin{theorem}\label{thm:genheg-lb} 
There exists a constant $c>0$ such that, for all sufficiently large $n$, there exists an $n$-vertex generalised hedgehog $H^*$ with
\[R(H^*) \geq c\frac{n^{3/2}}{\log n}.\]
\end{theorem}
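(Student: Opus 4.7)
The plan is to construct, for $N = \Omega(n^{3/2}/\log n)$, both a specific generalised hedgehog $H^*$ on $n$ vertices and a $2$-colouring of $K_N^{(3)}$ containing no monochromatic copy of $H^*$; both objects will be built probabilistically.

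To fix the hedgehog, let $b = \Theta(\sqrt{n/\log n})$ and $t = \Theta(\log n)$ with $b + t\binom{b}{2} = n$, and build $H^*$ with body $B$ of size $b$ and exactly $t$ spikes on each pair in $\binom{B}{2}$. This $H^*$ is $1$-degenerate by construction, and being maximally symmetric it has a huge automorphism group: $|\mathrm{Aut}(H^*)| \geq b!\cdot(t!)^{\binom{b}{2}}$, since the body is fully symmetric and the $t$ spikes on each pair are interchangeable. By Stirling, $\log|\mathrm{Aut}(H^*)| = \Theta(n\log\log n)$. The intuition for choosing these particular parameters is that a body any smaller produces too few distinct pairs to be hard to embed, while a body any larger reduces the per-pair spike multiplicity $t$ so that the automorphism savings vanish.

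Next I would take a uniformly random $2$-colouring of $K_N^{(3)}$ and bound the expected number $X$ of monochromatic copies of $H^*$ (counted as subhypergraphs, not labelled embeddings). The first-moment bound reads
\[
\mathbb{E}[X] \leq \frac{2\,N^n}{b!\,(t!)^{\binom{b}{2}}} \cdot 2^{-(n-b)}.
\]
This naive estimate, however, only forces $N = O(\sqrt{\log n})$, far short of the goal, because the $N^n$ factor is much too generous: it treats each injection into $[N]$ as a candidate embedding, ignoring the injectivity constraint on the $n-b$ spike images.

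The crux of the proof will be to sharpen this count by exploiting the matching structure of hedgehog embeddings. For each fixed body embedding $\phi\colon B\hookrightarrow[N]$ and each colour $c$, extensions of $\phi$ to a monochromatic copy correspond to systems of distinct representatives in the bipartite (multi)graph between spikes of $H^*$ and $[N]\setminus\phi(B)$, with row-sets given by the colour-$c$ links $L^c_{\phi(i)\phi(j)}$. I would bound this permanent via Bregman's inequality (replacing the trivial $N^{n-b}$ by roughly $(N/(2e))^{n-b}$), combine it with Chernoff-type concentration for the random link sizes $|L^c_{\phi(i)\phi(j)}|$, and then union-bound over the $\sim N^b/b!$ body embeddings. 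The main obstacle will be aligning the polylogarithmic factors so that the final estimate gives $\mathbb{E}[X]<1$ precisely when $N = \Theta(n^{3/2}/\log n)$; I expect this step requires either a carefully correlated (non-i.i.d.)\ colouring, or a probabilistic deletion step that trims a few vertices to absorb the remaining slack. A secondary difficulty is that the argument must work for \emph{both} colours simultaneously despite the fact that the blue and red link sizes are negatively correlated, which forces the concentration bounds to be uniform rather than averaged.
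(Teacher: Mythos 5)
Your approach has a fatal flaw at its foundation: a uniformly random $2$-colouring of $K_N^{(3)}$ cannot serve as the lower-bound construction for any $N$ that is even linear in $n$, let alone $N=\Theta(n^{3/2}/\log n)$. In a uniform colouring, by a Chernoff bound every pair of vertices lies in at least $N/3$ red triples and at least $N/3$ blue triples with high probability. One can then embed \emph{any} $n$-vertex generalised hedgehog monochromatically by brute force: place the body on arbitrary vertices, and for each spike greedily choose an unused vertex forming a red triple with the corresponding body pair --- at least $N/3-n>0$ choices remain at every step once $N>3n$. So for $N\gg n$ the monochromatic copies of $H^*$ are not merely expected to be numerous; they exist with high probability. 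No refinement of the first-moment count (Bregman, automorphism factors, correlated colourings, deletion) can show $\Exp[X]<1$ for an event that actually occurs, so the ``crux'' step you defer is not a technical obstacle but an impossibility. Your own back-of-envelope calculation ($N=O(\sqrt{\log n})$ from the naive bound) is already signalling this: the deficit is a factor of $N^{n}$ versus $2^{O(n\log\log n)}$, which no permanent estimate closes.

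The paper's proof uses two ideas you are missing. First, the colouring is not random over triples but \emph{structured}: take a random $2$-graph $\Gamma$ on $N=\Theta(n^{3/2}/\log n)$ vertices with edge probability about $n^{-1/2}\log n$, so that $\Gamma$ has maximum degree $O(n)$, no $K_{10}$, and no independent set of size $\sqrt{n}/50$; colour a triple red if it contains an edge of $\Gamma$ and blue otherwise. Second, the hedgehog $H^*$ is built with two distinct gadgets rather than your uniform spike multiplicity $t=\Theta(\log n)$: a body of size $\sqrt{n}/50$ with one spike per pair (a blue copy would force the body to be an independent set in $\Gamma$, which does not exist), together with $10$ distinguished body vertices each of whose pairs carries $n/100$ spikes (a red copy would force those $10$ vertices to span a clique in $\Gamma$, since a non-edge $uv$ of $\Gamma$ lies in at most $d_\Gamma(u)+d_\Gamma(v)=O(n)<n/100\cdot 10$ red triples --- and $K_{10}\not\subset\Gamma$). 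The tension between the independence number and the clique number of a triangle-free-ish random graph is what produces the $n^{3/2}/\log n$ bound; it cannot be reproduced by colouring triples independently.
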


We complement the above result with a near-sharp upper bound for the $2$-colour Ramsey numbers of generalised hedgehogs. In fact, the upper bound holds in the more general asymmetric setting. 
\begin{theorem}\label{thm:genheg-ub} 
There exists a constant $C>0$ such that, for any sufficiently large $n$ and any $n$-vertex generalised hedgehogs $H,H'$, we have 
\[R(H,H')\le Cn^{3/2}\,.\]
\end{theorem}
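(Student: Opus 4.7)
Fix a $2$-colouring $\chi\colon\binom{V}{3}\to\{\text{red},\text{blue}\}$ with $|V|=N\ge Cn^{3/2}$, and write each of $H,H'$ as a pair $(B,M)$, where $B$ is the body of size $b$ and $M$ is a multigraph on $B$ with $n-b$ edges, one per spike, encoding which pair of body vertices that spike attaches to. An embedding of $H$ in red is an injection $\phi\colon B_H\hookrightarrow V$ together with, for each edge of $M_H$ counted with multiplicity, a distinct vertex $w\in V\setminus\phi(B_H)$ making $\{\phi(u),\phi(v),w\}$ red when the spike records the pair $\{u,v\}$. The plan is to produce either such an embedding of $H$ in red or the analogous embedding of $H'$ in blue.

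The first step is a projection to pairs. For each $\{u,v\}\in\binom{V}{2}$, let $r(u,v)$ and $\beta(u,v)$ be its red and blue extension-degrees, so that $r(u,v)+\beta(u,v)=N-2\ge 2n$. Call $\{u,v\}$ \emph{red-good} if $r(u,v)\ge n$ and \emph{blue-good} if $\beta(u,v)\ge n$; every pair is at least one of the two. After relabelling the colours and swapping $H,H'$ if necessary, I assume that the red-good graph $G$ on $V$ contains at least $\tfrac12\binom{N}{2}$ edges, and I aim for a red embedding of $H$. The task then reduces to finding an injection $\phi\colon B_H\to V$ sending every edge of the support $U(M_H)\subseteq\binom{B_H}{2}$ into $E(G)$: given such $\phi$, each image pair has $\ge n$ red extensions in $V$, hence $\ge n-b$ in $V\setminus\phi(B_H)$, and Hall's theorem on the bipartite graph whose left side is the multiset of spikes (total size $n-b$) and whose right side is $V\setminus\phi(B_H)$, with each spike adjacent to the red extensions of its image pair, yields a system of distinct representatives that places the spikes.

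To embed the body, I proceed greedily along a degeneracy order of the support $U(M_H)$, maintaining at each stage $i$ a candidate set $C_i\subseteq V\setminus\phi(\text{placed})$ consisting of the common $G$-neighbourhood of those already-placed vertices that must later be joined to the next body vertex. An averaging argument --- or more carefully a dependent random choice applied to the iterated red-good-neighbourhood intersections --- should show that, provided $N\ge Cn^{3/2}$, one can always extend $\phi$ by one more vertex while keeping $|C_i|\ge n$, completing the body embedding in $b$ steps.

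\emph{Main obstacle.} The bottleneck is the body embedding when $U(M_H)$ is near-clique. In the extreme $U(M_H)=K_b$ with $b\sim\sqrt n$ (the standard hedgehog), iterating $\binom{b}{2}\sim n$ neighbourhood intersections would naively demand $N\ge 2^{\Omega(\sqrt n)}$, while direct graph Ramsey on $G$ yields only cliques of size $O(\log N)$. The gain needed to reach $n^{3/2}$ comes from two mechanisms that must be balanced: (i) exploiting the flexibility to swap to a blue embedding of $H'$ whenever the red candidate set threatens to collapse, so that one effectively works with the combined structure of both $G$ and its complement; and (ii) using that when $U(M_H)$ is close to $K_b$, the hedgehog $H$ is essentially a standard hedgehog and the Fox--Li bound $R(H_n)=O(n\log n)$ applies, which is far stronger than $Cn^{3/2}$. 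Making the switch between these regimes quantitative, and tuning the greedy degeneracy order so that the iterated intersections survive the intermediate densities of $U(M_H)$, is the heart of the proof.
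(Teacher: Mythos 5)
Your reduction to pairs (red-good means at least $n$ red extensions), your Hall/greedy placement of the spikes once the body is embedded, and your use of a degeneracy order on the support graph of the body are all in line with the paper's argument. But the core of your body-embedding step is the part that does not work, and you say so yourself: maintaining the \emph{common $G$-neighbourhood} of the already-placed back-neighbours shrinks the candidate set multiplicatively, so for a support graph that is anywhere near a clique you would need $N$ exponential in $\sqrt{n}$ rather than $N=O(n^{3/2})$. The two rescue mechanisms you sketch (switching colours when the candidate set collapses, or invoking Fox--Li in the near-clique regime) are not carried out, and neither is obviously quantifiable; in particular your opening reduction, ``the red-good graph contains at least half of all pairs,'' is a global density statement that gives no control whatsoever over how the bad pairs are distributed, which is exactly what the intersection argument would need.

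The missing idea is to make the forbidden set shrink \emph{additively} rather than multiplicatively, and this requires a per-vertex (not per-pair) dichotomy. The paper's Lemma~\ref{lem:onlyone} shows, by a counting argument on triples $uxy$ with $x$ in the red-bad neighbourhood and $y$ in the blue-bad neighbourhood of $u$, that every vertex $u$ is incident to at most $2n$ red-bad pairs or at most $2n$ blue-bad pairs. Taking the majority colour gives a set $V_1$ of at least $N/2$ vertices each incident to at most $2n$ red-bad pairs. Now the greedy body embedding along the degeneracy order only has to \emph{avoid} the red-bad neighbourhoods of the $d\le D$ back-neighbours: since $e(F)\le n$ forces the degeneracy $D\le 2\sqrt{n}$, each step excludes at most $2Dn+n\le 4n^{3/2}+n$ vertices, which is why $N=10n^{3/2}$ suffices uniformly over all generalised hedgehogs, with no case split on the density of the support graph. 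Without this switch from ``intersect good neighbourhoods'' to ``restrict to low bad-degree vertices and subtract bad neighbourhoods,'' your proposal does not close.
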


\section{The proof}

We need the following auxiliary result, which is proved using standard techniques for random graphs. 
\begin{lemma}\label{lemma:randomgraphs}
    For all sufficiently large $n\in \mathbb{N}$, there exists a graph $G$ on ${n^{3/2}}/({10^{6}\log n})$ vertices such that
    \begin{enumerate}[label=\upshape {(\roman{enumi})}]
    \item each vertex $v\in V(G)$ has degree at most $3n/(2\cdot 10^3)$;\label{random:degrees}
    \item $G$ contains no clique on 10 vertices;\label{random:clique}
    \item $G$ has no independent set of size $\sqrt{n}/50$.\label{random:independent}
\end{enumerate}
\end{lemma}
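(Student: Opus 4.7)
The plan is a routine probabilistic-method argument. Set $N := n^{3/2}/(10^6\log n)$ and let $G \sim G(N,p)$ be the binomial random graph on $N$ vertices with edge-probability $p := C(\log n)/\sqrt n$, where $C$ is an absolute constant to be fixed later in the window $100 < C \le 750$ (say $C = 300$). I will verify that each of \ref{random:degrees}, \ref{random:clique}, and \ref{random:independent} holds with probability $1-o(1)$, so that for $n$ large enough a single realisation of $G$ witnesses all three.

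For \ref{random:degrees}, each vertex has expected degree $p(N-1) = (1+o(1))Cn/10^6$. A Chernoff bound followed by a union bound over the $N = \mathrm{poly}(n)$ vertices gives maximum degree at most $2pN$ with high probability; this is below $3n/(2\cdot 10^3)$ provided $C \le 750$. For \ref{random:clique}, the expected number of copies of $K_{10}$ in $G$ is at most
\[
\binom{N}{10}p^{45} = O\!\left(\frac{(\log n)^{35}}{n^{15/2}}\right) = o(1),
\]
so Markov's inequality yields $K_{10}$-freeness with probability $1-o(1)$.

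The heart of the argument is \ref{random:independent}. Writing $\alpha := \lceil \sqrt n/50\rceil$, I would bound the expected number of independent sets of size $\alpha$ by
\[
\binom{N}{\alpha}(1-p)^{\binom{\alpha}{2}} \;\le\; \exp\!\Bigl( \alpha \log(eN/\alpha) \;-\; p\tbinom{\alpha}{2} \Bigr).
\]
A direct computation gives $\log(eN/\alpha) = (1-o(1))\log n$, whereas $p(\alpha-1)/2 = (1-o(1))(C/100)\log n$, so choosing $C$ strictly greater than $100$ (as we have) makes the bracketed quantity $-\Omega(\alpha\log n)$. Hence the expectation is $o(1)$, and one more application of Markov's inequality gives $\alpha(G) < \sqrt n/50$ with high probability.

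The only point to verify carefully is that the upper bound on $p$ forced by \ref{random:degrees} and the lower bound forced by \ref{random:independent} are simultaneously satisfiable; as the window $100 < C \le 750$ above shows, they are, with plenty of room to spare. No alteration step (e.g.\ deleting vertices to destroy bad cliques or dense spots) is needed: a union bound over the three high-probability events produces the required $G$ directly.
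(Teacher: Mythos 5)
Your proposal is correct and follows essentially the same route as the paper: take $G\sim G(N,p)$ with $p=\Theta(n^{-1/2}\log n)$, apply Chernoff plus a union bound for the degrees, and first-moment bounds for $K_{10}$-freeness and for the absence of large independent sets (the paper uses $p=800n^{-1/2}\log n$, comfortably inside your admissible window for $C$ once the slightly sharper $3pN/2$ Chernoff constant is used). Your explicit verification that the constraints on $p$ from \ref{random:degrees} and \ref{random:independent} are compatible is exactly the calculation implicit in the paper's choice of constants.
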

\begin{proof}
    Let $N={n^{3/2}}/({10^{6}\log n})$ and $p=800n^{-1/2}\log n$. Recall that $G(N,p)$ denotes the $N$-vertex binomial random graph where each possible  edge is included independently with probability $p$. Let $G\sim G(N,p)$. Chernoff's inequality says that the probability that a given vertex of $G(N,p)$ has degree more than $\tfrac32pN$ is at most $\exp(-\tfrac1{12}pN)\le N^{-2}$, and so by the union bound, with  probability at least $1-1/n$, every vertex of $G$ has degree at most $3pN/2 \le 3n/(2\cdot 10^3)$. It easily follows from a first moment argument that with high probability~$G$ contains no clique of order ten. Indeed, the probability that a fixed set of ten vertices induces a clique is~$p^{\binom{10}{2}} = p^{45}$; by the union bound, the probability that there exists a copy of $K_{10}$ is bounded by~$\binom{N}{10}p^{45} < 1/n$.  Similarly, setting $a = 10p^{-1}\log N \leq \sqrt{n}/50$, the probability that a fixed set of $a$ vertices is independent is $(1-p)^{\binom{a}{2}}$. The probability that $G$ contains an independent set of size $a$ is then at most
    \[\binom{N}{a}(1-p)^{\binom{a}{2}} < \exp\big(a\log N-\tfrac14pa^2\big)<\exp\big(-a\log N\big) < 1/n\,.\]
    Hence, there exists an instance satisfying all of~\ref{random:degrees},~\ref{random:clique} and~\ref{random:independent}, which we take as our graph~$G$.  
\end{proof}

\newcounter{propcounter}
\stepcounter{propcounter}

Our main contribution is the following construction, proving Theorem~\ref{thm:genheg-lb}.

\begin{proof}[Proof of Theorem~\ref{thm:genheg-lb}] We first define $H^*$ as follows. Let the body $B$ of $H^*$ have size $\sqrt{n}/50$. To every pair in $B$ attach one spike. In addition, to each pair among the first $10$ vertices of $B$, attach a further $n/100$ spikes. So far the graph contains $\tfrac{\sqrt{n}}{50} + \binom{\sqrt{n}/50}{2} + \binom{10}{2}\tfrac{n}{100} \leq n$ vertices.
Add spikes to reach $n$ vertices and, for each new spike, add an arbitrary edge.

Letting $c=10^{-3}$ and  $N=c^2 n^{3/2}/\log n$, Lemma~\ref{lemma:randomgraphs} gives an $N$-vertex graph $\Gamma$ on vertex set~$V$ such that
\begin{enumerate}[label=\textbf{\Alph{propcounter}\arabic{enumi}}]
    \item\label{A:1} each vertex in $V$ has degree at most $3cn/2$; 
    \item\label{A:2} $\Gamma$ contains no clique on $10$ vertices; 
    \item\label{A:3} $\Gamma$ has no independent set of size $\sqrt{n}/50$.
\end{enumerate}
We define a complete $2$-coloured $3$-graph $G$ on $V$ as follows. For a triple $uvw$, if one or more of $uv,uw,vw$ is an edge of $\Gamma$, we colour $uvw$ red, and, otherwise, we colour it blue.
\begin{figure}[h!]
   \centering\begin{tikzpicture}[thick, scale=.7]
    \tikzstyle{every node}=[circle, draw, fill, inner sep=0pt, minimum width=4pt]
    
    \draw node (u1) at (0,0){};
    \draw node (v1) at (1,0){};
    \draw node (w1) at (.5,1){};

    \draw node (u2) at (3,0) {};
    \draw node (v2) at (4,0) {};
    \draw node (w2) at (3.5,1) {};
  
   \draw node (u3) at (6,0) {};
    \draw node (v3) at (7,0) {};
    \draw node (w3) at (6.5,1) {};

    \draw node (u4) at (9,0) {};
    \draw node (v4) at (10,0) {};
    \draw node (w4) at (9.5,1) {};
  

    \draw (u1) -- (v1) -- (w1) -- (u1);

    \draw (u2) -- (v2) -- (w2);
    \draw[dashed] (w2) -- (u2);

    \draw (u3) -- (v3);
    \draw[dashed] (v3) -- (w3) -- (u3);

    \draw[dashed] (u4) -- (v4) -- (w4) -- (u4);

    \triple{(v1)}{(u1)}{(w1)}{6pt}{1pt}{red,opacity=0.8}{red,opacity=0.2};
    \triple{(v2)}{(u2)}{(w2)}{6pt}{1pt}{red,opacity=0.8}{red,opacity=0.2};
    \triple{(v3)}{(u3)}{(w3)}{6pt}{1pt}{red,opacity=0.8}{red,opacity=0.2};
    \triple{(v4)}{(u4)}{(w4)}{6pt}{1pt}{blue,opacity=0.8}{blue,opacity=0.2};   
   \end{tikzpicture}
   \caption{The possible configurations and corresponding coloured edge in the 3-graph $G$, where a dashed line represents a non-edge in the 2-graph $\Gamma$.}
\end{figure}
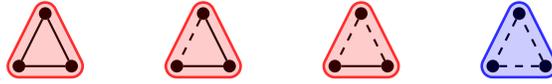

Suppose now that $G$ contains a blue copy of $H^*$, and let $B'$ be the image of the body of~$H^*$ in $V$. As each pair of $B'$ must be in at least one blue edge of $G$ in the copy of $H^*$, $B'$ is an independent set in $\Gamma$ of size $\sqrt{n}/50$, contradicting~\ref{A:3}.

Suppose instead that $G$ contains a red copy of $H^*$. Observe that, if $uv$ is a non-edge in~$\Gamma$, then any red edge $uvw$ of $H^*$ necessarily contains an edge $uw$ or $vw$ of $\Gamma$. In particular, $uv$ is in at most $d_\Gamma(u)+d_\Gamma(v)\le 3cn$ red edges of $G$ (because of~\ref{A:1}). Consider the set $K$ which is the image of the first $10$ vertices of the body of $H^*$. As each pair in $K$ is in at least $n/100>3cn$ red edges of $G$, $\Gamma[K]$ is a clique on $10$ vertices, contradicting~\ref{A:2}.
\end{proof}

We note that this construction also shows $R(S_n,H^*)=\Omega(n^{3/2}/\log n)$, where $S_n$ is the standard hedgehog with $n$ vertices.
\bigskip

We now turn our attention to the upper bound.
Its proof is a small modification of arguments of Conlon, Fox and R\"odl~\cite{CFR}, beginning with the following lemma which we extract from their work.

\begin{lemma}\label{lem:onlyone}
 Let $H$ be a $2$-coloured complete $3$-graph, and let $d_r,d_b$ be integers such that $H$ has at least $d_r+d_b+1$ vertices. Fix $u\in V(H)$, and colour the pairs $uv$ red if $uv$ is in fewer than~$d_r$ red triples, and blue if in fewer than $d_b$ blue triples. Then $u$ is in fewer than $2d_b$ red pairs, or in fewer than $2d_r$ blue pairs.
\end{lemma}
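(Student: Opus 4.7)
The plan is to argue by contradiction through a bipartite double count. Write $R=\{v\in V(H)\setminus\{u\}:uv\text{ is coloured red}\}$ and $B=\{v\in V(H)\setminus\{u\}:uv\text{ is coloured blue}\}$. I assume, for contradiction, that $|R|>2d_b$ and $|B|>2d_r$.

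First I would record the preliminary observation that $R$ and $B$ are disjoint, which makes the count below clean. Indeed, if $v\in R\cap B$, then the pair $uv$ would lie in fewer than $d_r+d_b$ triples in total, while it actually lies in exactly $|V(H)|-2\ge d_r+d_b-1$ triples. So disjointness is precisely where the hypothesis $|V(H)|\ge d_r+d_b+1$ enters.

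The main step is to double count $T:=\bigl|\{(v,w)\in R\times B:uvw\text{ is a blue triple}\}\bigr|$. Summing over $w\in B$, each $w$ contributes at most $d_b-1$ (because $uw$ is in fewer than $d_b$ blue triples by definition of $B$), giving $T<d_b|B|$. Summing instead over $v\in R$, each $v$ contributes at least $|B|-(d_r-1)$ (because $uv$ is in fewer than $d_r$ red triples, so at most $d_r-1$ of the $|B|$ choices of $w\in B$ can make $uvw$ red, and the rest must make it blue), giving $T\ge|R|(|B|-d_r)$. Combining, $|R|(|B|-d_r)<d_b|B|$; the assumption $|B|>2d_r$ forces $|B|-d_r>|B|/2$, and so $|R|<2d_b$, contradicting $|R|>2d_b$.

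There is no real obstacle beyond identifying the right quantity to count: once one decides to double count blue cross-triples between $R$ and $B$, the pairing of "fewer than $d_b$ blue triples through $uw$" (for the upper bound) with "fewer than $d_r$ red triples through $uv$" (for the lower bound, via the complementary colour) is essentially forced, and the constants align because $|B|>2d_r$ is exactly the threshold making $|B|/(|B|-d_r)<2$.
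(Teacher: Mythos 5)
Your proof is correct and takes essentially the same route as the paper's: both arguments rest on the disjointness of $R$ and $B$ followed by a double count of the cross-triples $uvw$ with $v\in R$, $w\in B$ (the paper counts all $|R||B|$ of them and bounds the red ones via $R$ and the blue ones via $B$; you count only the blue ones from both sides, which is the same count rearranged). One tiny arithmetic slip worth fixing: in the disjointness step, ``fewer than $d_r$ red'' plus ``fewer than $d_b$ blue'' gives at most $d_r+d_b-2$ triples through $uv$, and it is this bound --- not merely ``fewer than $d_r+d_b$'', which is compatible with the actual count $|V(H)|-2\ge d_r+d_b-1$ --- that yields the contradiction.
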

\begin{proof}
  Suppose for a contradiction that $u$ forms a red pair with a set $R$ of size $2d_b$ and a blue pair with a set $B$ of size $2d_r$. Observe that a pair $uv$ cannot be both blue and red, since it is in at least $d_r+d_b-1$ edges of $H$; hence, the sets $R$ and $B$ are disjoint.
 
 The number of red edges containing $u$ and a vertex of $R$ is by definition less than $d_r|R|=2d_rd_b$; similarly, the number of blue edges containing $u$ and a vertex of $B$ is less than $d_b|B|=2d_rd_b$. However, the total number of edges  $uxy$ with  $x\in R$ and $y\in B$ is $|R||B|=4d_rd_b$, a contradiction.
\end{proof}

With the lemma in hand, we can complete the proof of the main theorem; again, this follows the ideas of~\cite{CFR} but requires a little more care.

\begin{proof}[Proof of Theorem~\ref{thm:genheg-ub}]
 Let $N=10n^{3/2}$, and let $G$ be an $N$-vertex $2$-coloured complete $3$-graph. We define an auxiliary graph $\Gamma$ on $V(G)$, where we connect two vertices by a red edge if they lie in fewer than $n$ red triples in $G$, and blue if in fewer than $n$ blue triples. We now mark a vertex as red if it is in at most $2n$ red edges in $\Gamma$, and blue otherwise. By Lemma~\ref{lem:onlyone}, every blue vertex in $\Gamma$ is in at most $2n$ blue edges. By the pigeonhole principle,  there are at least~$N/2$ red vertices or at least $N/2$ blue vertices. Suppose without loss of generality that there are at least~$N/2$ red vertices and call the set of red vertices $V_1$; we will embed a red hedgehog in $G$ whose body will lie in~$V_1$.
 
 Given an $n$-vertex generalised hedgehog $H$, we define a graph $F$ on the body of $H$, where the edges of $F$ are the pairs that form an edge with some spike of $H$. Observe that $F$ has at most $n$ edges, and suppose $F$ has degeneracy exactly $D$. Then, there is a subgraph $F'$ of~$F$ such that every vertex of $F'$ has degree at least $D$. It follows that $F'$ has at least $D+1$ vertices and hence at least $\binom{D+1}{2}$ edges. Since $e(F)\leq n$, we have $D\le2\sqrt{n}$.
 
 Fix a $D$-degeneracy order on the vertices of $F$. We begin to embed $H$ into $G$ by embedding the vertices of the body in the degeneracy order, at each embedding of a vertex $x\in V(H)$ choosing an image $u\in V_1$ so that the following property is maintained: the edges of $F$ are all mapped to pairs that do not form red edges in $\Gamma$. By definition, adding $x$ embeds at most $D$ edges of $F$, whose other endpoints are mapped to some vertices $v_1,\dots,v_d\in V_1$, where $d\le D$. Each $v_i$ has red-degree at most $2n$ in $\Gamma$. It follows that, since $|V_1|\geq 5n^{3/2} > 2Dn+n$, there will exist a vertex of $V_1$ which was not previously used in the embedding and which does not form a red edge with any of $v_1,\dots,v_d$.
 
 We now greedily extend this embedding to an embedding of $H$. For each spike vertex of~$H$, pick greedily a vertex which is not yet used and forms a red edge with its corresponding pair $e$ in the body. Since $e$ is an edge of $F$, it is not mapped to a red edge of $\Gamma$, so there are at least~$n$ red $3$-edges of $G$ using $e$; thus not all candidate vertices have been used in the embedding so far.
\end{proof}

\section{Concluding remarks}

It would be interesting to close the $\log$-factor gap in Theorems~\ref{thm:genheg-lb} and~\ref{thm:genheg-ub}, even specifically for the type of graph $H^*$ giving the lower bound. One argument in favour of the lower bound is that our upper bound proof for $H^*$ uses the fact that we can find an independent set of size $\tfrac{N}{\Delta(R)+1}$ in the graph $R$ of red edges. This would of course be improved if we knew that~$R$ contained no or few copies of $K_{10}$. On the other hand, if $R$ does contain a copy of $K_{10}$, then this provides us with the high-degree part of a blue copy of $H^*$, which one might hope to extend to a blue $H^*$, given Fox and Li's result~\cite{FoxLi} that the Ramsey number of the standard hedgehog is $O(n\log n)$.

\medskip
We would like better upper bounds on the $2$-colour Ramsey numbers of general $1$-degenerate hypergraphs. By iteratively removing a maximal set of degree-one vertices, we see that any $1$-degenerate hypergraph $H$ can be recursively decomposed into generalised hedgehogs.

\begin{lemma}\label{lemma:hedgehogs}If $H$ is a $1$-degenerate $3$-graph with no isolated vertices, then there are edge-disjoint subgraphs $H_1,\ldots, H_t\subset H$ such that $H_i$ is a $3$-uniform generalised hedgehog for each $i\in [t]$, and $H=\bigcup_{i\in [t]}H_i$.
\end{lemma}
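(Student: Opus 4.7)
The plan is to induct on $|E(H)|$, peeling off a single generalised hedgehog at each step whose spikes are chosen among the current degree-one vertices. The base case $|E(H)|=0$ is immediate: the no-isolated-vertices hypothesis forces $V(H)=\emptyset$, and we take $t=0$.

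For the inductive step let $L\subseteq V(H)$ denote the set of vertices of degree exactly $1$ in $H$. Applying $1$-degeneracy to $H$ itself and using that $H$ has no isolated vertices yields $L\neq\emptyset$, and for each $v\in L$ I write $e_v$ for its unique edge. Set $E_L=\{e\in E(H):e\cap L\neq\emptyset\}$. I would then choose, arbitrarily, a distinguished spike $s(e)\in e\cap L$ for each $e\in E_L$, and let $S_1=\{s(e):e\in E_L\}$ and $B_1=V(H)\setminus S_1$.

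Defining $H_1$ to have edge set $E_L$, body $B_1$ and spikes $S_1$, the main verification is that this really is a generalised hedgehog. Each $v\in S_1$ lies in exactly one edge of $H_1$, namely $e_v$, since $v$ already has degree $1$ in $H$. The delicate point is that for each $e\in E_L$ the two non-spike vertices must lie in $B_1$: if $x\in e\setminus\{s(e)\}$ were in $S_1$, then $x\in L$ with unique edge $e_x=e$, and the equality $x=s(e')$ for some $e'$ would force $e'=e$ and hence $s(e)=x$, contradicting $x\neq s(e)$. This injectivity observation is the only real obstacle in the argument.

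To finish, form $H^\star$ by deleting the edges $E_L$ from $H$ and discarding any newly isolated vertices. As a subgraph of $H$, $H^\star$ is $1$-degenerate; it has no isolated vertices by construction and satisfies $|E(H^\star)|<|E(H)|$ since $|E_L|\geq 1$. The inductive hypothesis yields a decomposition $H^\star=H_2\cup\cdots\cup H_t$ into edge-disjoint generalised hedgehogs, and, since $E(H_1)=E_L$ is disjoint from $E(H)\setminus E_L\supseteq\bigcup_{j\geq 2}E(H_j)$, concatenating gives the required decomposition $H=H_1\cup H_2\cup\cdots\cup H_t$.
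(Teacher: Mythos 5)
Your proof is correct and follows essentially the same route as the paper, which only gives a one-sentence sketch (``iteratively removing a maximal set of degree one vertices''): you peel off the edges meeting the degree-one vertices as one generalised hedgehog and recurse on the rest. The one detail the sketch glosses over --- an edge may contain two or three degree-one vertices, so one must designate a single spike per edge and check the others land in the body --- is exactly the injectivity point you verify, so nothing is missing.
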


Suppose we write $H$ as the union of $t$ generalised hedgehogs as in Lemma~\ref{lemma:hedgehogs}. Iterating an argument similar to that of Theorem~\ref{thm:genheg-ub}, we can show that $R(H)=n^{O(t)}$. For small $t$ this is a relatively good bound, but in general $t$ can be linear in $n$, which gives an embarrassingly weak  general upper bound of $R(H)=n^{O(n)}$; nevertheless, we do not know of anything better.

\section*{Acknowledgments}
This project was initiated at the UCL Workshop in Extremal and Probabilistic Combinatorics in 2023. We are grateful to the organisers, Shoham Letzter, Kyriakos Katsamaktsis, and Amedeo Sgueglia, for inviting us and to Alexey Pokrovskiy for proposing this problem.
\smallskip

S.B.: Most of this research was conducted while the author was at the School of Mathematics, University of Birmingham, Birmingham, United Kingdom. The research leading to these results was supported by EPSRC, grant no.\ EP/V048287/1 and by ERC Advanced Grants ``GeoScape'', no.\ 882971 and ``ERMiD'', no.\ 101054936. There are no additional data beyond that contained within the main manuscript. 
\smallskip

M.P.S.: Most of this research was conducted while the author was at the Mathematics Institute, University of Warwick, supported by the European
Research Council (ERC) under the European Union Horizon 2020 research and innovation programme (grant agreement No.\ 947978). Also supported by ANID Fondecyt Regular grant No. 1241398 and ANID Basal Grant CMM FB210005.

\bibliographystyle{amsplain}
\bibliography{bib}

\end{document}